\newtheorem{theorem}{\bf Theorem}[section]
\newtheorem{remark}{\bf Remark}[section]
\newtheorem{proposition}{\bf Proposition}[section]
\newtheorem{lemma}{\bf Lemma}[section]
\newtheorem{corollary}{\bf Corollary}[section]
\begin{document}

\title[]{On properties of vertical velocity for 2-D steady water waves}

\author{Yong Zhang, Fengquan Li and Fei Xu}

\address{Department of Mathematics, Dalian University of Technology, Dalian 116024,
People's Republic of China}
\ead{\textcolor{blue}{fqli@dlut.edu.cn} and \textcolor{blue}{yongbuzhibu@mail.dlut.edu.cn}}
\vspace{10pt}
\begin{indented}
\item[]October 2019
\end{indented}

\begin{abstract}
In this article, we mainly investigate the properties of vertical velocity $v$ for two dimensional
 steady water waves over a flat bed. Firstly we prove the existence of the inflection point for each streamline, then we find the behavior of $v$ along each streamline depends strictly on concavity and convexity of streamline, which contributes to complete Constantin's conjecture on $v$ in Stokes wave. And the location of maximum vertical fluid velocity is also proven to be at the inflection point. Besides, we also extend our results to the cases with monotonous vorticity($\gamma=u_{y}-v_{x}$).
\end{abstract}

%
% Uncomment for keywords
%\vspace{2pc}
\quad \quad \quad \quad  {\it Keywords}: Steady water waves, vertical velocity, vorticity

\quad \quad \quad \quad   Mathematics Subject Classification numbers: 35C07, 35Q35, 76C05
% Uncomment for Submitted to journal title message
%\submitto{\JPA}
%
% Uncomment if a separate title page is required
%\maketitle
%
% For two-column output uncomment the next line and choose [10pt] rather than [12pt] in the \documentclass declaration
%\ioptwocol
%

\section{Introduction}

It's often possible for us to observe water wave while watching the sea or a lake. Recently, the classical hydrodynamic problem concerning two-dimensional steady periodic travelling water waves has attracted considerable interests, starting with the systematic study of Constantin and Strauss\cite{13} for periodic waves of finite depth. In the framework of irrotational water waves, a series of important results have been got in the understanding and analysis of steady waves, mostly concerning the existence of large amplitute solutions and their properties. The existence of global bifurcation theories for irrotational water waves was studied by Keady and Norbury\cite{14}. Amick and Toland\cite{21,22} have proved Stoke's conjecture on extreme waves and stagnation point--being one at which the relatived fluid velocity is zero. The pressure of irrotational steady water wave and flow beneath the waves have been investigated by Constantin and Strauss\cite{10}. Toland \cite{5} and Constantin\cite{18,19} considered the properties of velocity field and trajectories of particles in Stokes wave and some numerical researches have been carried out in\cite{20}. The qualitive description of the flow beneath a smooth Stokes wave is almost complete. But the only missing aspect is the behavior of vertical velocity component $v$, whose monotonicity along the streamline is unknown. Although, between crest and trough, Constantin conjectured that $v$ first increases with positive values away from the crest line and then decreases toward zero beneath the wave trough along each streamline in\cite{19}, it is not proved until now.

On the other hand, some significant advances in the corresponding mathematical theories with vorticity have been made in the last few years. The existence of global continua of smooth solutions was investigated by Constantin and Strauss\cite{13} for the periodic finite depth problem, and by Hur\cite{15} for infinite depth. Symmetry of steady periodic surface waves with vorticity both for water waves over finite depth and for infinite depth have been shown by Constantin and Escher\cite{16,17}. Therefore, we know that the wave profiles have exactly one crest and one trough per minimal period, are monotone between crests and troughs, and have a vertical axis of symmetry. These properties of wave profiles are also suitable to irrotational case. Besides, Varvaruca\cite{2,3} proved the existence of extreme waves with vorticity. The stability properties and the periodic steady water waves with discontinuous vorticity were studied by Constantin and Strauss\cite{11,9}. For constant vorticity, Constantin and Varvaruca\cite{7} obtained the regularity and local bifurcation results. Moreover, Constantin et al.\cite{8} further proved global bifurcation results of steady gravity water waves with critical layers. Some results on streamlines, velocity field and particle paths within the fluid domain for rotational flows are proved by exploiting the maximum principles in\cite{6}. Constantin and Strauss\cite{12} considered the location of the point of maximal horizontal velocity and similar researches were carried out by Varvaruca\cite{4} and Basu\cite{1} for a large class of vorticity functions. But the results and knowledge on behaviour of the flow field within the fluid domain for rotational water waves are still far from complete. For example, there are no results available on how vertical velocity $v$ of rotational steady periodic water waves varies in the domain and the location of point of maximal vertical velocity with (or without) vorticity is not investigated until now.

In this paper, we find the relation between the profiles of streamline and the behavior of $v$ and also show that vertical velocity $v$ from crest line increases to its maximum on each streamline, then decreases to zero beneath wave trough along the streamline. The above result not only in stokes wave but also in steady water waves with arbitrary vorticity is proved by using maximum principles and the structure of equations. The other contribution of this paper is that the location of point of maximal vertical velocity in Stokes wave and in steady water waves with some class of vorticity is obtained. If we take Benoulli's law into consideration along the streamline, it's impossible to make the behavior of $v$ clear. Because the sum of the kinetic energy, potential energy and pressure energy is a constant, but the quare of relative horizontal velocity $(u-c)^{2}$ becomes larger and the potential energy becomes smaller from crest to trough, which makes the variation of vertical velocity $v$ uncertain along the streamline. Thus our results on velocity field supplement the previous research in Constantin\cite{18,19}.

\begin{figure}[ht]
\centering
\includegraphics{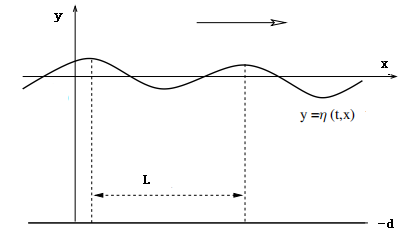}
\caption{A steady periodic water wave propagating over flat bed}
\label{fig1}
\end{figure}

\section{Preliminaries}
Before considering the governing equations of fluid dynamics for two-dimensional free gravity waves with vorticity $\omega$ (see Figure \ref{fig1}), we firstly make some reasonable assumptions. (see \cite{19})

(1) Free surface profile $\eta$, pressure $P$ and velocity field $(u,v)$ have the form $(X-ct)$ because we take steady travelling wave into consideration and they are periodic with period $L=2\pi>0$.

(2) $\eta$, $u$ and $P$ are symmetric about the crest line $x=0$ and $v$ is anti-symmetric about the crest line.

(3) The streamline $y(x)$ is strictly monotonic between successive crest line and trough line.

(4) The wave crest is at $(0,\eta(0))$, and wave trough is at $(\pm\pi,\eta(\pm\pi))$ and we always study in moving frame $(or (X-ct,Y)=(x,y))$ in this article.

(5) The wave speed $c$ is larger than the horizontal velocity $u$.(see\cite{23})

At the same time, we know that the steady water wave problem has many alternative formulations(see\cite{19}), each one offering certain advantages. In our work, we will in fact move between three equivalent versions of the governing equations.

\subsection{Governing equation in velocity formulation}
The flow is steady occupying a fixed region in the $(x,y)$ plane, lying between the flat bed $B={(x,-d): x\in R}$ for some $d>0$ and unknown free surface $S={(x,\eta(x)): x\in R}.$ We assume the density of water is constant 1 for incompressible case. Therefore, the governing equations are:
\begin{eqnarray}
u_{x}+v_{y}=0, &\qquad -d \leq y \leq \eta_(x) \label{eq2.1}\\
u_{y}-v_{x}=\omega, &\qquad -d \leq y \leq \eta_(x) \label{eq2.2}\\
(u-c) u_{x}+v u_{y}=-p_{x}, &\qquad -d \leq y\leq \eta(x) \label{eq2.3}\\
(u-c) v_{x}+v v_{y}=-p_{y}-g, &\qquad -d \leq y \leq \eta(x) \label{eq2.4}\\
v=0, &\qquad y=-d \label{eq2.5}\\
v=(u-c)\eta_{x}, &\qquad y=\eta(x) \label{eq2.6}\\
p=p_{atm},  &\qquad y=\eta(x) \label{eq2.7}
\end{eqnarray}

Where $\omega$ represents vorticity of the flow.

\subsection{Governing equation in stream function formulation}
Let $(u,v)$ be the velocity field, then we define the stream function $\psi(x,y)$ by
\begin{eqnarray}
\psi_{x}=-v, &\qquad  \psi_{y}=u-c \label{eq2.8}
\end{eqnarray}
We can deduce that the vorticity is
\[
\omega=u_{y}-v_{x}=\Delta\psi
\]
and the assumption $u<c$ guarantees the existence of a function $\gamma$, such that $\omega=\gamma(\psi)$ in fluid domain. Let
\[
\Gamma(p)=\int^{p}_{0} \gamma(-s) ds, ~~~~~~p_{0}\leq p\leq 0
\]
have maximum value $\Gamma_{max}$ for $p\in[p_{0},0]$. Where $ p_{0}=\int^{\eta}_{-d} (u(x,y)-c) dy$ is called the relative mass flux.
From Bernoulli's law, we know that
\[
E=\frac{(c-u)^{2}+v^{2}}{2}+gy+P+\Gamma(-\psi)
\]
is a constant along each streamline. Therefore, the dynamic boundary condition is equivalent to
\[
|\nabla\psi|^{2}+2g(y+d)=Q,~~~~~~on ~~y=\eta(x)
\]
where $Q=2(E-P_{atm}+gd)$ is a constant.

Summarizing the above considerations, we can reformulate the governing equations as the free boundary problem:
\begin{eqnarray}
\Delta \psi=\gamma(\psi), &\qquad  -d < y<\eta(x) \label{eq2.9}\\
|\nabla\psi|^{2}+2g(y+d)=Q, &\qquad  y=\eta(x) \label{eq2.10}\\
\psi=0, &\qquad y=\eta(x) \label{eq2.11}\\
\psi=-p_{0}, &\qquad  y=-d \label{eq2.12}
\end{eqnarray}

\subsection{Governing equation in height function formulation}
The main difficulties associated with the problem in stream function formulation are its nonlinear character and the free surface $y=\eta(x)$ is unknown. Therefore, we introduce a coordinate transform devised by Dubreil-Jacotin in 1934 (see \cite{24}). We define the height function
\[
h=y+d
\]
Then we do change of variables
\[
q=x,~~~~~p=-\psi
\]
Which transforms  the fluid domain
\[
D_{+}={(x,y):~x\in(0,\pi),~-d<y<\eta(x)}
\]
into rectangular domain
\[
\Omega_{+}=(0,\pi)\times(p_{0},0)
\]
From above change of variables, we have
\[
h_{q}=\frac{v}{u-c}, \qquad h_{p}=\frac{1}{c-u}
\]
\[
v=-\frac{h_{q}}{h_{p}}, \qquad u=c-\frac{1}{h_{p}}
\]

Consequently, we can rewrite the governing equations as height function formulation:
\begin{eqnarray}
\left(1+h_{q}^{2}\right) h_{p p}-2 h_{q} h_{p} h_{q p}+h_{p}^{2} h_{q q}=\gamma(-p) h_{p}^{3}, &\quad  p_{0}<p<0 \label{eq2.13}\\
1+h_{q}^{2}+(2gh-Q) h_{p}^{2}=0, &\quad  p=0 \label{eq2.14}\\
h=0, &\quad  p=p_{0} \label{eq2.15}
\end{eqnarray}

\begin{remark}\label{remark2.1}
We only need to take $\Omega_{+}=(0,\pi)\times(p_{0},0)$, $S_{+}={(x,\eta(x)): x\in (0,\pi)}$ and $B_{+}={(x,-d): x\in (0,\pi)}$ into consideration (see following Figure \ref{fig2}) because of the antisymmetry and periodic properties of $v$ in our assumptions.
\end{remark}

\begin{figure}[ht]
\centering
\includegraphics{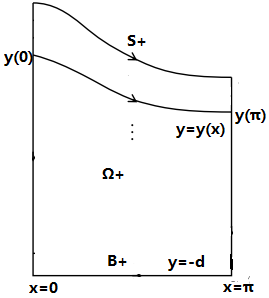}
\caption{The domain of a half periodic steady water wave}
\label{fig2}
\end{figure}

\section{On vertical velocity in Stokes wave}
In this section, we study the vertical velocity $v$ in Stokes waves. Based on the result derived in\cite{18,19}, we can state the following lemma:
\begin{lemma}\label{lemma3.1}
The vertical velocity $v>0$~in $\Omega_{+}\cup S_{+}$ and $v$ will attain its maximum on each streamline $y=y(x)$  in the domain $[0,\pi]\times[-d,y(x)]$. (see Figure \ref{fig2})
\end{lemma}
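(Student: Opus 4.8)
The plan is to establish two separate facts about the vertical velocity $v$: first that $v>0$ throughout $\Omega_{+}\cup S_{+}$, and second that $v$ attains a maximum along each streamline in the stated half-strip. For the positivity, I would work in the original $(x,y)$ variables on the half-period domain $\Omega_{+}$. Since we are in the irrotational (Stokes) case, $\omega=0$, so $u_y-v_x=0$ together with incompressibility $u_x+v_y=0$ shows that both $u-c$ and $v$ are harmonic conjugates; in particular $v$ itself is harmonic in $\Omega_{+}$. I would then pin down the sign of $v$ on the boundary pieces: on the bottom $B_{+}$ we have $v=0$ by \eqref{eq2.5}; on the crest line $x=0$ and trough line $x=\pi$ the antisymmetry of $v$ forces $v=0$ as well; and on the free surface $S_{+}$ the boundary condition \eqref{eq2.6} gives $v=(u-c)\eta_x$, where $u<c$ by assumption (5) and $\eta_x<0$ on $(0,\pi)$ by the monotonicity assumption (3), so $v>0$ there. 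With $v$ harmonic, nonnegative on the boundary and strictly positive on $S_{+}$, the strong maximum principle (or rather minimum principle applied to $-v$) yields $v>0$ in the interior $\Omega_{+}$.

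For the second assertion, I would fix a streamline $y=y(x)$ and consider the closed region $R=[0,\pi]\times[-d,y(x)]$ bounded above by that streamline. The key point is that $v$ restricted to $R$ is again harmonic (harmonicity is a property of the full irrotational field, independent of which subdomain we look at), and by the positivity just established together with the boundary analysis, $v$ is continuous on the compact set $R$ and vanishes on the lateral and bottom boundary while being positive on the streamline itself. A continuous function on a compact set attains its maximum, and since $v$ is nonconstant and harmonic, the maximum cannot be attained in the interior; hence it is attained on the upper boundary, i.e. on the streamline. This is exactly the statement that $v$ attains its maximum \emph{on} each streamline.

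**The main obstacle** I anticipate is not the maximum principle itself but justifying the boundary behavior of $v$ cleanly, in particular verifying that $\eta_x<0$ strictly on the open interval $(0,\pi)$ so that $v>0$ holds on all of $S_{+}$ rather than merely $v\geq 0$. This relies on the symmetry and monotonicity results quoted from \cite{16,17,19} in the Preliminaries, and I would invoke assumptions (2), (3) and (5) explicitly to close the argument. A secondary subtlety is ensuring sufficient regularity of $v$ up to the boundary (so that the maximum principle and the evaluation of boundary values are legitimate); here I would lean on the smoothness of Stokes waves away from stagnation, as established in the cited work of Constantin \cite{18,19}, so that $v\in C(\overline{\Omega_{+}})\cap C^2(\Omega_{+})$ is available. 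Once these boundary and regularity facts are in hand, both conclusions of the lemma follow directly from the harmonicity of $v$ and the strong maximum principle.
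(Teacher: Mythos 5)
Your proposal is correct, but its second half takes a genuinely different route from the paper. For the positivity of $v$, you and the paper do essentially the same thing: apply the strong maximum principle to $v$ with the boundary data $v=0$ on the bed, crest line and trough line, and $v=(u-c)\eta_x>0$ on the free surface. For the location of the maximum, however, the paper does not run a maximum principle on the subregion at all: it invokes the known result (from Constantin's book, cited as \cite{19}) that the horizontal velocity $u$ strictly decreases along each streamline between crest and trough, combines this with the monotonicity of the streamlines to get $u_x<0$, and then uses incompressibility $u_x+v_y=0$ to conclude $v_y>0$ throughout $[0,\pi]\times[-d,y(x)]$; the maximum of $v$ then sits on the upper boundary simply because $v$ increases along every vertical segment. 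Your argument instead exploits harmonicity of $v$ (valid here since Section 3 is the irrotational Stokes setting) together with the sign of $v$ on the boundary pieces of the region below the streamline: the maximum lies on the boundary, it cannot lie on the pieces where $v=0$, hence it lies on the streamline (your phrase ``positive on the streamline itself'' should exclude the endpoints $x=0,\pi$, but that does not affect the conclusion). Both arguments are sound. The paper's route yields the stronger pointwise statement $v_y>0$, i.e.\ monotonicity of $v$ with height, at the cost of importing the monotonicity of $u$ along streamlines; your route is more self-contained but is tied to irrotationality --- indeed, the paper's own treatment of the rotational case (Lemma 4.2) is forced into exactly your style of argument, with $\Delta v=\gamma' v$ replacing harmonicity and a sign or smallness condition on $\gamma'$ needed to run the maximum principle, whereas the $v_y>0$ mechanism extends whenever $u$ is known to decrease along streamlines (cf.\ Lemma 4.3).
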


\begin{proof}
The first result $v>0$~in $\Omega_{+}\cup S_{+}$ can be got by using strong maximum principle to $v$ (see \cite{19}). The second result is as follows.\\
From governing equation, we have conversation of mass
\begin{eqnarray}
u_{x}+v_{y}=0 \label{eq3.1}
\end{eqnarray}
And for any point $(x,y)\in[0,\pi]\times[-d,y(x)]$, it must be on some streamline $y=g(x)$. According to\cite{19}, we know horizontal velocity $u$ decreases along each streamline, i.e.,
\begin{eqnarray}
\frac{d}{dx}[u(x,g(x))]<0,~~~~for ~~x\in(0,\pi) \label{eq3.2}
\end{eqnarray}
Then combining the monotonicity of streamline, we can get
\begin{eqnarray}
u_{x}(x,g(x))<0,~~~~for ~~x\in(0,\pi) \label{eq3.3}
\end{eqnarray}
Eq.(\ref{eq3.1}) yields
\begin{eqnarray}
v_{y}>0,~~~~for ~~x\in(0,\pi) \label{eq3.4}
\end{eqnarray}
Therefore, the maximum of $v$ in the domain $[0,\pi]\times[-d,y(x)]$ will be attained on $y=y(x)$.
\end{proof}

\begin{theorem}\label{theorem3.1}
There is at least one inflection point on each streamline $y=y(x)$ (except $y=-d$) for $x\in(0,\pi)$. (see Figure \ref{fig2})
\end{theorem}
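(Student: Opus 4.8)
The plan is to work with the tangent slope of a fixed streamline as a function of the horizontal variable $x$, and to show that this slope must first decrease away from the crest and then increase back to zero at the trough, which forces a change of concavity in between.

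First I would record the geometry of a streamline $y=y(x)$ lying strictly above the bed. Along such a streamline the tangent slope is
\[
y'(x)=\frac{v(x,y(x))}{u(x,y(x))-c},
\]
which follows from the stream-function relations $\psi_{x}=-v$, $\psi_{y}=u-c$ (equivalently from $h_{q}=v/(u-c)$ in the height formulation). Since $v$ is anti-symmetric about the crest line $x=0$, one has $v(0,y)\equiv 0$; and by periodicity together with the same anti-symmetry, $v(\pi,y)=v(-\pi,y)=-v(\pi,y)$, so $v(\pi,y)\equiv 0$ on the trough line as well. Because $u<c$ by assumption (5), the denominator never vanishes, and therefore $y'(0)=y'(\pi)=0$: every streamline meets the crest and trough lines with a horizontal tangent. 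For $x\in(0,\pi)$, Lemma \ref{lemma3.1} gives $v>0$ while $u-c<0$, so $y'(x)<0$ there. Thus each streamline other than the bed is strictly decreasing on $(0,\pi)$ with horizontal tangents at both ends.

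Next I would extract the inflection point from this slope profile by elementary calculus. Fixing a small $\varepsilon>0$ and applying the mean value theorem on $[0,\varepsilon]$ yields a point $\xi\in(0,\varepsilon)$ with $y''(\xi)=y'(\varepsilon)/\varepsilon<0$, since $y'(\varepsilon)<0=y'(0)$. Applying it on $[\pi-\varepsilon,\pi]$ yields $\xi'\in(\pi-\varepsilon,\pi)$ with $y''(\xi')=-y'(\pi-\varepsilon)/\varepsilon>0$, since $y'(\pi-\varepsilon)<0=y'(\pi)$. Because the wave is smooth (indeed real-analytic in the interior), $y''$ is continuous, so by the intermediate value theorem there is $x_{0}\in(\xi,\xi')\subset(0,\pi)$ with $y''(x_{0})=0$; as $y''$ takes values of both signs across $x_{0}$, the concavity genuinely changes there, and $x_{0}$ is an inflection point. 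Geometrically this merely says the streamline is concave near its highest point (the crest) and convex near its lowest point (the trough).

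Finally, the bed $y=-d$ is rightly excluded, since there $v\equiv 0$ by (\ref{eq2.5}), so the bed is a straight segment carrying no inflection point. The main obstacle is not the calculus, which is routine, but the correct use of the symmetry and periodicity structure to guarantee that $v$ vanishes on \emph{both} the crest line and the trough line while staying strictly positive in between; once the slope $y'$ is shown to start at $0$, become negative, and return to $0$, the existence of at least one sign change of $y''$ is automatic.
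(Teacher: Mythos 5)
Your proof is correct, but it takes a genuinely different route from the paper's. The paper argues by contradiction: assuming no inflection point, the streamline is strictly convex (or concave) on $(0,\pi)$, and using $y_{x}=v/(u-c)$ together with incompressibility $v_{y}=-u_{x}$ and $v=(u-c)y_{x}$, it converts the sign of $y_{xx}$ into strict monotonicity of $v$ along the streamline (in effect $\frac{d}{dx}v(x,y(x))=(u-c)y_{xx}$), which contradicts $v=0$ on both the crest and trough lines with $v>0$ in between. You instead run the logic directly on the slope function: from Lemma \ref{lemma3.1}, (\ref{eq3.5}) and (\ref{eq3.8}) you get $y'(0)=y'(\pi)=0$ and $y'<0$ on $(0,\pi)$, and then the mean value theorem plus the intermediate value theorem produce a sign change of $y''$. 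Your argument is more elementary and direct --- it never uses conservation of mass (\ref{eq3.1}) nor any differentiation of $v$ along the streamline, only one-variable calculus on $y'$. What the paper's computation buys in exchange is the link between concavity of the streamline and monotonicity of $v$ along it (Corollary \ref{corollary3.1}), which is the workhorse for Theorems \ref{theorem3.2} and \ref{theorem3.3}; your argument yields existence of the inflection point but not that link, so the paper would still need the chain-rule computation later. One point to tighten: the intermediate value theorem gives a zero of $y''$, not automatically a point where concavity changes; your appeal to interior real-analyticity of streamlines does close this gap (zeros of $y''$ are isolated unless $y''\equiv 0$, which is excluded since $y''(\xi)<0$), but it should be made explicit, e.g.\ by taking $x_{0}=\sup\{x\in[\xi,\xi']:\ y''(x)<0\}$ and noting $y''<0$ on a left punctured neighborhood and $y''>0$ on a right punctured neighborhood of $x_{0}$.
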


\begin{proof}
From our assumptions, it is easy to know that $v(-\pi,y)=v(\pi,y)=-v(\pi,y)$ by using antisymmetry and periodic properties of $v$, then
\begin{eqnarray}
v(0,y)=v(\pi,y)=0 \label{eq3.5}
\end{eqnarray}
Indeed, if there is no inflection point on $y=y(x)$ for $x\in(0,\pi)$, then the streamline $y=y(x)$ must be strictly convex or concave function for $x\in(0,\pi)$. Without loss of generality we assume that it is strictly convex. According to the regularity results of streamline in\cite{15}, we know that:
\begin{eqnarray}
y_{xx}>0,~~~~for ~~x\in(0,\pi) \label{eq3.6}
\end{eqnarray}
And from (\ref{eq3.1}), we find that:
\begin{eqnarray}
\frac{dv(x,y(x))}{dx}=v_{x}+v_{y}y_{x}=v_{x}-u_{x}y_{x} \label{eq3.7}
\end{eqnarray}
On the other hand, we have $\psi(x,y(x))=c_{1}$ ($c_{1}$ is a constant) because $y=y(x)$ is a streamline, thus
\begin{eqnarray}
y_{x}=-\frac{\psi_{x}}{\psi_{y}}=\frac{v}{u-c} \label{eq3.8}
\end{eqnarray}
From (\ref{eq3.6}) and (\ref{eq3.8}), we can obtain
\begin{eqnarray}
y_{xx}=(\frac{v}{u-c})_{x}=\frac{v_{x}(u-c)-vu_{x}}{(u-c)^{2}}>0 \label{eq3.9}
\end{eqnarray}
With the assumption $u<c$, the above inequality means
\begin{eqnarray}
v_{x}(u-c)-vu_{x}>0 \label{eq3.10}
\end{eqnarray}
According to the properties of flow field, we know
\begin{eqnarray}
 v=\frac{dy(x)}{dt}=y_{x}\frac{dx}{dt}=(u-c)y_{x} ~~~~ on~~ y=y(x)  \label{eq3.11}
\end{eqnarray}
 Then combining the assumption $y_{x}<0$ with (\ref{eq3.7})(\ref{eq3.10})(\ref{eq3.11}), we can get
\begin{eqnarray}
v\frac{dv(x,y(x))}{dx}=vv_{x}-vu_{x}y_{x}=(v_{x}(u-c)-vu_{x})y_{x}<0 \label{eq3.12}
\end{eqnarray}
From Lemma \ref{lemma3.1}, $v>0$ in $\Omega_{+}\cup S_{+}$, thus
\begin{eqnarray}
\frac{dv(x,y(x))}{dx}<0 ~~~~for ~~x\in(0,\pi) \label{eq3.13}
\end{eqnarray}
This means $v$ decreases strictly along streamline $y=y(x)$, which is in contradiction with (\ref{eq3.5}), thus we have proved the result for convex case. It is the same for concave case by replacing $y_{xx}>0$ with $y_{xx}<0$. This completes the proof.
\end{proof}

\begin{corollary}\label{corollary3.1}
When the streamline $y=y(x)$ is concave for $x\in(0,\pi)$, the $v$ will increase along the streamline; when the streamline $y=y(x)$ is convex for $x\in(0,\pi)$, then $v$ will decrease along the streamline.
\end{corollary}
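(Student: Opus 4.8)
The plan is to read this corollary directly off the sign analysis already assembled in the proof of Theorem \ref{theorem3.1}, since there the link between the curvature of a streamline and the direction of variation of $v$ was, in effect, computed in full. The starting point is the identity (\ref{eq3.12}),
\[
v\,\frac{dv(x,y(x))}{dx}=\bigl(v_{x}(u-c)-vu_{x}\bigr)\,y_{x},
\]
which was obtained on an arbitrary streamline $y=y(x)$ by combining (\ref{eq3.7}), the particle relation (\ref{eq3.11}) and the stream-function formula (\ref{eq3.8}). Because Lemma \ref{lemma3.1} gives $v>0$ throughout $\Omega_{+}\cup S_{+}$, the sign of $\frac{dv}{dx}$ along the streamline is controlled entirely by the sign of the product $\bigl(v_{x}(u-c)-vu_{x}\bigr)\,y_{x}$.

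I would then evaluate the two factors separately. For the first factor I would appeal to (\ref{eq3.9}): since $(u-c)^{2}>0$, the quantity $v_{x}(u-c)-vu_{x}$ carries exactly the sign of $y_{xx}$. For the second factor I would use assumptions (3) and (4), which force each streamline to be strictly monotone between crest and trough, so that $y_{x}<0$ for $x\in(0,\pi)$; this is the same sign already invoked at (\ref{eq3.12}). Combining the two factors finishes the proof: in the concave case $y_{xx}<0$ makes $v_{x}(u-c)-vu_{x}<0$, and multiplying by $y_{x}<0$ gives a positive product, so $\frac{dv}{dx}>0$ and $v$ increases; in the convex case $y_{xx}>0$ makes $v_{x}(u-c)-vu_{x}>0$, and multiplying by $y_{x}<0$ gives a negative product, so $\frac{dv}{dx}<0$ and $v$ decreases. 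The convex branch is precisely what was already established inside the proof of Theorem \ref{theorem3.1}.

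Since the entire analytical content is inherited from the theorem, I do not expect a genuine obstacle here. The only point requiring care is the sign bookkeeping when $y_{xx}$ flips sign: one must keep track of the fact that $y_{x}$ remains negative throughout $(0,\pi)$, so that the sign of the product is governed solely by the curvature. A secondary point worth stating explicitly is that ``concave'' and ``convex for $x\in(0,\pi)$'' should be read as $y_{xx}$ having a fixed sign on the whole open interval, which is exactly what upgrades the conclusion from a purely local statement to genuine monotonicity of $v$ along the entire streamline.
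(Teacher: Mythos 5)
Your proposal is correct and follows essentially the same route as the paper: both transfer the sign of $y_{xx}$ to the quantity $v_{x}(u-c)-vu_{x}$ via the identity $y_{xx}=\frac{v_{x}(u-c)-vu_{x}}{(u-c)^{2}}$ and then read off the sign of $\frac{dv(x,y(x))}{dx}$. The only cosmetic difference is bookkeeping: you multiply by $v>0$ (via (\ref{eq3.12})) and use $y_{x}<0$, whereas the paper divides by $u-c<0$ using (\ref{eq3.1}) and (\ref{eq3.11}); these are algebraically equivalent.
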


\begin{proof}
Without loss of generality, we just show the concave case. If the streamline $y=y(x)$ is concave, then we have
\begin{eqnarray}
y_{xx}<0 \label{eq3.14}
\end{eqnarray}
From (\ref{eq3.8})(\ref{eq3.9}), we can obtain
\begin{eqnarray}
y_{xx}=(\frac{v}{u-c})_{x}=\frac{v_{x}(u-c)-vu_{x}}{(u-c)^{2}} \label{eq3.15}
\end{eqnarray}
From (\ref{eq3.14})(\ref{eq3.15}), then
\begin{eqnarray}
v_{x}(u-c)-vu_{x}<0 \label{eq3.16}
\end{eqnarray}
According to the assumption $u<c$ and (\ref{eq3.1}) (\ref{eq3.11}) (\ref{eq3.16}), it's easy to see
\begin{eqnarray}
v_{x}+v_{y}y_{x}>0 \label{eq3.17}
\end{eqnarray}
that is to say
\begin{eqnarray}
\frac{dv(x,y(x))}{dx}=v_{x}+v_{y}y_{x}>0 \label{eq3.18}
\end{eqnarray}
Then $v$ increase along the streamline for $x\in(0,\pi)$, it is the same for convex case.
\end{proof}

\begin{corollary}\label{corollary3.2}
The number of inflection points on each streamline $y=y(x)$ (except $y=-d$) for $x\in(0,-\pi)$ is odd.
\end{corollary}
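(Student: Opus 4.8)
The plan is to reduce everything to the behaviour of the single scalar function $V(x):=v(x,y(x))$, the vertical velocity restricted to the streamline, and to count inflection points from the way $V$ oscillates between its two boundary zeros. Two facts are already available: from (\ref{eq3.5}) we have $V(0)=V(\pi)=0$, while Lemma \ref{lemma3.1} gives $V(x)>0$ for every $x\in(0,\pi)$. So $V$ departs from $0$, stays strictly positive throughout the interior, and returns to $0$ at $x=\pi$.

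Next I would pin down the exact dictionary between the geometry of the streamline and the monotonicity of $V$. Combining (\ref{eq3.9}) and (\ref{eq3.12}) yields $v\,V'(x)=(u-c)^2\,y_{xx}\,y_{x}$, and since $v>0$, $(u-c)^2>0$ and $y_{x}<0$ on $(0,\pi)$, the coefficient multiplying $y_{xx}$ is strictly negative. Hence $\mathrm{sign}\,V'(x)=-\,\mathrm{sign}\,y_{xx}(x)$, which is precisely the quantitative content of Corollary \ref{corollary3.1}. In particular $V'$ vanishes and switches sign exactly where $y_{xx}$ does, so an inflection point of the streamline is nothing other than a sign change of $V'$.

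With this dictionary I would list the (finitely many, by the regularity of streamlines from \cite{15}) inflection points as $x_{1}<\cdots<x_{N}$ in $(0,\pi)$. On each of the $N+1$ open subintervals they cut out, $y_{xx}$ keeps a constant sign, so by Corollary \ref{corollary3.1} $V$ is strictly monotone there, and across every $x_{i}$ the sign of $y_{xx}$ flips. The boundary zeros then fix the two outermost signs: because $V(0)=0$ while $V>0$ just to the right, $V$ must increase on $(0,x_{1})$, forcing $y_{xx}<0$ there; symmetrically $V(\pi)=0$ with $V>0$ just to the left forces $V$ to decrease on $(x_{N},\pi)$, so $y_{xx}>0$ there. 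An alternating sign pattern that begins negative and ends positive must have an odd number of switches, i.e. $N$ is odd, which is the assertion.

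The step I expect to be delicate is the bookkeeping rather than the analysis: I must count genuine sign changes of $y_{xx}$ (true inflection points) rather than mere zeros at which the concavity does not actually change, and I must ensure these are finite in number so that the alternating-sign count is well defined. Both points are handled by invoking the smoothness of the streamlines and by defining an inflection point to be a point where $y_{xx}$ changes sign; once that is fixed, the identity $\mathrm{sign}\,V'=-\,\mathrm{sign}\,y_{xx}$ turns the whole statement into the elementary fact that a positive bump pinned to zero at both endpoints carries exactly one more local maximum than local minimum.
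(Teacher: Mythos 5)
Your proposal is correct and follows essentially the same route as the paper: both arguments rest on Lemma \ref{lemma3.1}, Corollary \ref{corollary3.1} and the boundary zeros (\ref{eq3.5}) to force the streamline to be concave on the first arc and convex on the last arc, and then use the alternation of concavity across inflection points to conclude odd parity. The only cosmetic difference is that the paper argues by contradiction (an even count would make the final arc concave, so $v$ would increase up to $(\pi,\eta(\pi))$ against $v(\pi)=0$), whereas you count sign changes directly; your explicit attention to finiteness of the inflection points and to defining them as genuine sign changes of $y_{xx}$ tightens points the paper leaves implicit.
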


\begin{proof}
If the number of inflection point on each streamline is even $2n$ ($n\in Z^{+}$) for $x\in(0,-\pi)$, thus the concavity and convexity of streamline $y=y(x)$ will vary for $2n$ times. The streamline firstly must be concave until arriving at the first inflection point along $(0,\eta(0))$ to $(\pi,\eta(\pi))$ (see Figure \ref{fig3}), otherwise, from Corollary \ref{corollary3.1} it is contradicted with the result in Lemma \ref{lemma3.1}. Then it becomes convex until arriving the second inflection, and so on. At last, the streamline will be concave until $(\pi,\eta(\pi))$ because the concavity and convexity of streamline will change $2n$-time. That is to say, at the last time, $v$ will from a positive value(see Lemma \ref{lemma3.1}) increase along the streamline until $(\pi,\eta(\pi))$, which is contradicted with (\ref{eq3.5}). So we finish the proof.
\end{proof}

\begin{figure}[ht]
\centering
\includegraphics{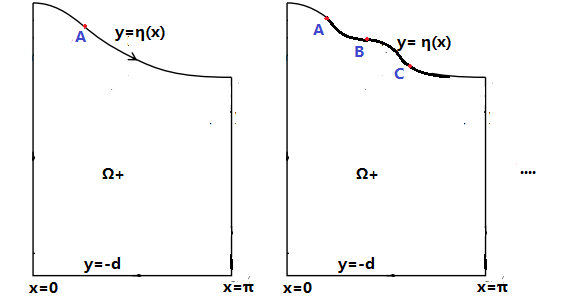}
\caption{The free surface with odd inflection points}
\label{fig3}
\end{figure}

\begin{remark}\label{remark3.1}
From Theorem \ref{theorem3.1} and Corollary \ref{corollary3.2}, we know that each streamline has at least one inflection point for $x\in(0,\pi)$ and the number of inflection point is odd. According to the wave profile's monotonicity, thus the surface wave profile is just like the above Figure \ref{fig3}. If there is more than one inflection point on each streamline, we can also give the corresponding description on the behavior of $v$ along the streamline according to Corollary \ref{corollary3.1}. Without loss of generality, so we assume there is only one inflection point in following results.
\end{remark}

\begin{theorem}\label{theorem3.2}
If each streamline only has one inflection point for $x\in(0,\pi)$, then $v$ first increases with positive values away from the crest line and then decreases toward zero beneath the wave trough along each streamline(except $y=-d$).
\end{theorem}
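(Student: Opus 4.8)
The plan is to combine the boundary behaviour of $v$ on the crest and trough lines with the concavity structure of the streamline that is forced by the single-inflection-point hypothesis. First I would record the two anchoring facts: equation (\ref{eq3.5}) gives $v(0,y)=v(\pi,y)=0$, so $v$ vanishes on the crest line and on the trough line, while Lemma \ref{lemma3.1} gives $v>0$ throughout the interior $\Omega_{+}$. Thus along the fixed streamline $y=y(x)$ the function $x\mapsto v(x,y(x))$ starts at $0$ at $x=0$, is strictly positive on $(0,\pi)$, and returns to $0$ at $x=\pi$; the content of the theorem is precisely that this passage is monotone up and then monotone down.

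Next I would pin down the orientation of the concavity. By hypothesis the streamline has exactly one inflection point $x_{0}\in(0,\pi)$, so its concavity changes sign exactly once: it is either concave on $(0,x_{0})$ and convex on $(x_{0},\pi)$, or the reverse. To decide which, observe that immediately to the right of the crest $v$ must increase, since it leaves the value $0$ and enters the region where $v>0$. By Corollary \ref{corollary3.1} a convex arc forces $v$ to decrease, and a $v$ decreasing from $0$ would become negative, contradicting $v>0$. Hence the streamline must be concave on $(0,x_{0})$, and therefore convex on $(x_{0},\pi)$; this is exactly the normalization already invoked in the proof of Corollary \ref{corollary3.2}.

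Finally I would apply Corollary \ref{corollary3.1} piecewise: on the concave arc $(0,x_{0})$ one gets $\frac{d}{dx}v(x,y(x))>0$, so $v$ increases strictly from $0$ up to its value at $x_{0}$, while on the convex arc $(x_{0},\pi)$ one gets $\frac{d}{dx}v(x,y(x))<0$, so $v$ decreases strictly back to $0$ at the trough. This proves that $v$ first increases with positive values away from the crest line and then decreases toward zero beneath the trough, as claimed. Moreover, reading off the identity $v\,\frac{d}{dx}v(x,y(x))=(u-c)^{2}y_{x}y_{xx}$ which follows from (\ref{eq3.11})(\ref{eq3.15}), at $x=x_{0}$ we have $y_{xx}=0$ and hence $\frac{d}{dx}v(x,y(x))=0$, so the maximum of $v$ along the streamline is attained precisely at the inflection point.

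I expect no serious analytic obstacle here, since the heavy lifting, namely the sign relation between streamline curvature and the monotonicity of $v$, is already packaged in Corollary \ref{corollary3.1}. The only delicate point is the orientation argument of the middle paragraph: one must rule out the convex-first configuration using the boundary value $v=0$ at the crest together with interior positivity, rather than merely citing the odd-count Corollary \ref{corollary3.2}. Were several inflection points allowed, this orientation bookkeeping would become the genuine difficulty, but under the single-inflection-point hypothesis it reduces to the one-step comparison above.
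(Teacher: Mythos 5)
Your proposal is correct and follows essentially the same route as the paper: split into the two possible orientations (concave-then-convex vs.\ convex-then-concave), preclude the convex-first case by combining $v(0,y)=0$ with $v>0$ from Lemma \ref{lemma3.1} and the decrease forced by Corollary \ref{corollary3.1}, then apply Corollary \ref{corollary3.1} piecewise on the two arcs. Your write-up is actually more explicit than the paper's (which merely says Case 1 is ``easily precluded''), and your closing identity $v\,\frac{d}{dx}v(x,y(x))=(u-c)^{2}y_{x}y_{xx}$ is a nice addition that anticipates Theorem \ref{theorem3.3}.
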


\begin{proof}
If there is only one inflection point, there will be two cases. \\
Case 1 : Assume the inflection point is at $(x_{0},y(x_{0}))$, the streamline is convex for $x\in(0,x_{0})$ and is concave for $x\in(x_{0},\pi)$. \\
Case 2 : the streamline is concave for $x\in(0,x_{0})$ and is convex for $x\in(x_{0},\pi)$.\\
From Lemma \ref{lemma3.1} and Corollary \ref{corollary3.1}, we can easily preclude the Case 1, thus the results follow from Corollary \ref{corollary3.1}.
\end{proof}

\begin{theorem}\label{theorem3.3}
If there is only one inflection point on free surface for $x\in(0,\pi)$, then $v$ will attain its maximum at this inflection point in $\overline{\Omega_{+}}$. (see Figure \ref{fig2})
\end{theorem}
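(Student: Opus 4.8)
The plan is to reduce the two-dimensional maximization of $v$ over $\overline{\Omega_+}$ to a one-dimensional maximization along the free surface, and then to invoke Theorem \ref{theorem3.2}. Since $\overline{\Omega_+}$ is compact and $v$ is continuous, a maximum is certainly attained somewhere. First I would recall from the proof of Lemma \ref{lemma3.1} that $v_{y}>0$ throughout the fluid domain, a consequence of $u_{x}<0$ together with the conservation of mass $u_{x}+v_{y}=0$. Thus for every fixed $x\in[0,\pi]$ the map $y\mapsto v(x,y)$ is strictly increasing, so that $v(x,y)<v(x,\eta(x))$ whenever $-d\le y<\eta(x)$. Consequently the global maximum of $v$ over $\overline{\Omega_+}$ cannot sit at any interior or bottom point and must lie on the free surface $S_{+}$; equivalently,
\[
\max_{\overline{\Omega_+}} v=\max_{x\in[0,\pi]} v(x,\eta(x)).
\]

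Second, I would observe that the free surface $y=\eta(x)$ is itself the uppermost streamline of the flow and, by hypothesis, carries exactly one inflection point for $x\in(0,\pi)$. Theorem \ref{theorem3.2} therefore applies to this streamline directly: starting from the crest line $x=0$ the vertical velocity $v$ first increases with positive values, attains its largest value precisely at the inflection point $(x_{0},\eta(x_{0}))$, and then decreases to zero beneath the trough at $x=\pi$. Hence the one-dimensional maximum $\max_{x\in[0,\pi]} v(x,\eta(x))$ is reached at $x=x_{0}$. Combining the two steps yields
\[
\max_{\overline{\Omega_+}} v=v(x_{0},\eta(x_{0})),
\]
which is the assertion of the theorem.

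The delicate point is the reduction step rather than the behaviour along the surface, which is already furnished by Theorem \ref{theorem3.2}. I expect the main obstacle to be justifying that the strict inequality $v_{y}>0$ persists up to and including the free surface, so that the comparison $v(x,y)<v(x,\eta(x))$ remains valid for boundary points as well; this rests on the regularity of $v$ up to $S_{+}$ together with the monotonicity statement of Lemma \ref{lemma3.1} applied to the topmost streamline $y=\eta(x)$, for which the associated strip $[0,\pi]\times[-d,\eta(x)]$ is exactly $\overline{\Omega_+}$. Once this is secured, no further computation is required.
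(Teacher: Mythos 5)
Your proposal is correct and follows essentially the same route as the paper: reduce the maximization over $\overline{\Omega_{+}}$ to the free surface via the monotonicity $v_{y}>0$ (which is exactly the content of Lemma \ref{lemma3.1} applied to the topmost streamline), and then locate the one-dimensional maximum at the inflection point using Theorem \ref{theorem3.2}. The paper's own proof is just a terser version of this two-step argument, citing Lemma \ref{lemma3.1} and Theorem \ref{theorem3.2} in the same way.
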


\begin{proof}
From Lemma \ref{lemma3.1}, we know that $v$ will attain its maximum on free surface $y=\eta(x)$ for $x\in(0,\pi)$. Assume $(x_{0},\eta(x_{0}))$ is an inflection point, we have $v$ will first increase with positive values away from the crest $(0,\eta(0))$ until $(x_{0},\eta(x_{0}))$, then decrease toward zero at $(\pi,\eta(\pi))$ along free surface according to Theorem \ref{theorem3.2}. Therefore, the result is following.
\end{proof}

\section{On vertical velocity in steady water wave with vorticity}
In this section, we investigate velocity field of steady periodic water waves with vorticity. If assuming that the vorticity is monotonically varying or the vorticity is constant, then we can get similar results as Stokes wave.
\begin{lemma}\label{lemma4.1}
For arbitrary vorticity, $v>0$ in $\Omega_{+}\bigcup S_{+}$.
\end{lemma}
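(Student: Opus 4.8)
The plan is to run a maximum-principle argument for $v=-\psi_x$ on the half-period fluid domain $D_{+}=\{(x,y):x\in(0,\pi),\,-d<y<\eta(x)\}$, just as in the Stokes case, but in a form that survives the loss of sign information carried by an arbitrary vorticity function. First I would record the boundary values of $v$ along $\partial D_{+}$. On the flat bed $v=0$ by (\ref{eq2.5}); on the crest line $x=0$ and the trough line $x=\pi$ we have $v=0$ by the antisymmetry/periodicity relation (\ref{eq3.5}). On the free surface, (\ref{eq2.6}) gives $v=(u-c)\eta_{x}$, and since $u<c$ by assumption (5) while $\eta_{x}<0$ for $x\in(0,\pi)$ by the monotonicity of the profile in assumption (3), the product is strictly positive. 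Hence $v\ge 0$ on all of $\partial D_{+}$, with $v>0$ on the open surface $S_{+}$ and $v\not\equiv 0$.

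Next I would produce the equation satisfied by $v$. Differentiating $\Delta\psi=\gamma(\psi)$ from (\ref{eq2.9}) in $x$ and using $v=-\psi_{x}$ gives
\[
\Delta v=\gamma'(\psi)\,v \qquad \mathrm{in}\ D_{+}.
\]
This is exactly where the difficulty for arbitrary vorticity appears: the zeroth-order coefficient $\gamma'(\psi)$ has no definite sign, so the strong maximum principle cannot be applied to $v$ directly (unlike the special case $\gamma'\le 0$). The hard part, and in fact the whole content of the statement, is to remove this obstruction. My resolution is to divide by a nonvanishing solution of the same linearized equation. Differentiating (\ref{eq2.9}) in $y$ shows that $\psi_{y}$, and hence $\phi_{0}:=c-u=-\psi_{y}$, also solves $\Delta\phi_{0}=\gamma'(\psi)\phi_{0}$; moreover $\phi_{0}>0$ throughout $\overline{D_{+}}$ by assumption (5).

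Finally I would set $\sigma:=v/(c-u)=v/\phi_{0}$. A direct computation using the two displayed equations yields the divergence-form identity
\[
\nabla\cdot\!\left((c-u)^{2}\,\nabla\sigma\right)=\phi_{0}\,\Delta v-v\,\Delta\phi_{0}=0 \qquad \mathrm{in}\ D_{+},
\]
in which the zeroth-order term has disappeared. For such an equation the strong maximum principle (and Hopf's boundary-point lemma) hold with no sign restriction on the coefficients, the operator being uniformly elliptic because $c-u$ is bounded away from $0$. From the boundary analysis, $\sigma\ge 0$ on $\partial D_{+}$, $\sigma>0$ on $S_{+}$, and $\sigma\not\equiv 0$; the minimum principle then forces $\sigma>0$ at every interior point, so $v=(c-u)\sigma>0$ in $\Omega_{+}$, while $v>0$ on $S_{+}$ is already built into the boundary data. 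This gives $v>0$ in $\Omega_{+}\cup S_{+}$ for arbitrary $\gamma$. I expect the only genuinely delicate point to be the justification of the quotient trick together with the attendant regularity and ellipticity ($c-u$ bounded below, $v,\phi_{0}\in C^{2}$); once that is in place the conclusion is immediate and, crucially, independent of the sign of $\gamma'$.
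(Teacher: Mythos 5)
Your proof is correct (modulo a smoothness caveat noted below), but it takes a genuinely different route from the paper's. The paper works in the Dubreil--Jacotin height-function formulation: differentiating (\ref{eq2.13}) with respect to $q$ yields $Lh_{q}=0$ as in (\ref{eq4.1}), where $L$ has no zeroth-order term because in $(q,p)$-coordinates the vorticity enters only through $\gamma(-p)$, which is independent of $q$; the strong maximum principle applied to $h_{q}$ with the boundary data (\ref{eq4.3})--(\ref{eq4.4}) then gives $h_{q}<0$, hence $v=(u-c)h_{q}>0$. You instead stay in physical variables and remove the troublesome zeroth-order coefficient $\gamma'(\psi)$ by dividing by the positive solution $\phi_{0}=c-u$ of the same linearized equation --- a generalized (Protter--Weinberger type) maximum principle; this is also precisely the mechanism behind Remark \ref{remark4.1}, since the existence of a positive solution forces the first eigenvalue of $-\Delta+\gamma'$ to be nonnegative. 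Note that the two proofs in fact apply the maximum principle to the same scalar quantity: by (\ref{eq4.2}) your $\sigma=v/(c-u)$ equals $-h_{q}$, and the boundary data coincide. What each route buys: yours is coordinate-free and makes transparent why the sign of $\gamma'$ is irrelevant; the paper's buys strictly more generality, because it never differentiates $\gamma$ at all. That is the one genuine caveat in your argument: writing $\Delta v=\gamma'(\psi)v$ and $\Delta\phi_{0}=\gamma'(\psi)\phi_{0}$ requires $\gamma\in C^{1}$ (and $\psi\in C^{3}$); if the vorticity function is merely H\"older continuous --- which ``arbitrary vorticity'' is meant to allow, cf.\ the discontinuous-vorticity setting of \cite{9} --- then $v=-\psi_{x}$ is only $C^{1,\alpha}$ and your pointwise equation for $v$ is unavailable, whereas $Lh_{q}=0$ survives (e.g.\ via difference quotients in $q$, the coefficients being independent of $q$). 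So your argument is a clean alternative proof for $C^{1}$ vorticity functions, but it does not quite reach the full strength of the lemma as the paper proves it.
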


\begin{proof}
We differentiate the first identity in (\ref{eq2.13}) with respect to $q$, then get
\begin{eqnarray}
Lh_{q}=0 \label{eq4.1}
\end{eqnarray}
where $L=(1+h_{q}^{2})\partial_{p}^{2}-2h_{p}h^{q}\partial_{qp}+h_{p}^{2}\partial_{q}^{2}+2h_{q}h_{pp}\partial_{q}-(3\gamma(-p)h_{p}^{2}+2h_{q}h_{qp})\partial_{p}$. And it's easy to check $L$ is uniformly elliptic operator.\\
From change of variables, we can know
\begin{eqnarray}
h_{q}=\frac{v}{u-c} \label{eq4.2}
\end{eqnarray}
Combining bottom boundary condition (\ref{eq2.5}) with (\ref{eq3.5})(\ref{eq4.2}), we have
\begin{eqnarray}
h_{q}=0,~~~~~on~q=0, q=\pi, p=p_{0} \label{eq4.3}
\end{eqnarray}
According surface boundary condition (\ref{eq2.6}) and (\ref{eq4.2}), it's easy to see
\begin{eqnarray}
h_{q}=\frac{v}{u-c}=\frac{(u-c)\eta_{x}}{u-c}=\eta_{x}<0,~~~~~on~p=0 \label{eq4.4}
\end{eqnarray}
By applying strong maximum principle to $h_{q}$, then (\ref{eq4.1})(\ref{eq4.3})(\ref{eq4.4}) show
\begin{eqnarray}
h_{q}<0~~~~~~for ~~(q,p)\in (0,\pi)\times(p_{0},0) \label{eq4.5}
\end{eqnarray}
From assumption $u<c$ and (\ref{eq4.2}), we get
\begin{eqnarray}
v>0~~~~~~in ~~\Omega_{+} \label{eq4.6}
\end{eqnarray}
And according to our assumptions, we know $v=(u-c)\eta_{x}>0$ on $S_{+}$. Thus, the proof is completed.
\end{proof}

\begin{lemma}\label{lemma4.2}
For the monotonically increasing or constant vorticity, the vertical velocity $v$ will attain its maximum on each streamline $y=y(x)$ in the domain $[0,\pi]\times[-d,y(x)]$.
\end{lemma}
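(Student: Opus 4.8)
The plan is to mirror the proof of Lemma \ref{lemma3.1}. There the conclusion rested on the single fact $v_y>0$ below the streamline, obtained from the incompressibility relation $u_x+v_y=0$ together with $u_x<0$; once $v_y>0$ holds, $v$ increases with height along every vertical segment, so its maximum over $[0,\pi]\times[-d,y(x)]$ is forced onto the top streamline $y=y(x)$. Hence it suffices to prove $u_x<0$ (equivalently $v_y>0$) throughout $D_{+}$ for monotonically increasing or constant vorticity.

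To do this I would work in the height-function formulation, where the domain is the rectangle $\Omega_{+}$ and we may build directly on Lemma \ref{lemma4.1}. Two observations make the rectangle convenient. First, each streamline is a level set $\{p=\mathrm{const}\}$, and since $u=c-1/h_p$ we have $\partial_q u=h_{qp}/h_p^{2}$ at fixed $p$; thus ``$u$ decreases along streamlines'' is precisely $h_{qp}<0$. Second, $v=-h_q/h_p$, so $v_y$ has the same sign as $v_p=\partial_p(-h_q/h_p)$, and $v_y>0$ is the statement that $v$ is increasing in $p$, consistent with $v=0$ on the bed $p=p_0$ and $v>0$ at the surface $p=0$ already known from Lemma \ref{lemma4.1}.

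The mechanism for the sign is a maximum principle applied to a once-$p$-differentiated quantity (e.g.\ $h_{qp}$, or $v_p$ itself). Recall from Lemma \ref{lemma4.1} that $h_q$ solves the uniformly elliptic equation (\ref{eq4.1}), $Lh_q=0$, with no zeroth-order term. Differentiating this relation (or equation (\ref{eq2.13})) once more in $p$ produces a uniformly elliptic equation for the differentiated unknown whose forcing contains the factor $\partial_p[\gamma(-p)]=-\gamma'(-p)$. This is exactly where the hypothesis enters: for monotonically increasing $\gamma$ we have $\gamma'\ge 0$, so this term carries a definite sign, while for constant vorticity it vanishes identically and the equation becomes homogeneous. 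The boundary data on the three rigid sides are clean: $h_q\equiv 0$ along $q=0$ and $q=\pi$ forces $h_{qp}=0$ there, and since $h_q<0$ inside with $h_q=0$ on $p=p_0$, Hopf's lemma at the bed gives $h_{qp}<0$ on $p=p_0$. With the correct forcing sign, the maximum principle then propagates negativity into the interior.

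The main obstacle is the free surface $p=0$. Unlike the three rigid sides, the sign of the differentiated quantity there is not immediate; controlling it requires differentiating the dynamic boundary condition (\ref{eq2.14}) and feeding in $h_q=\eta_x<0$ together with the already-established interior and bottom information. A second, related subtlety is that translating the streamline-monotonicity $h_{qp}<0$ into the pointwise statement $u_x<0$ produces an extra term, since $u_x=\partial_q u+u_p v$ with $u_p=h_{pp}/h_p^{2}$; one must therefore also control the sign of $u_p$, and handling this term in tandem with the surface estimate is the delicate part of the argument, and is where monotonicity of the vorticity does its real work. Once $u_x<0$ is secured in $D_{+}$, the conclusion follows as in Lemma \ref{lemma3.1}.
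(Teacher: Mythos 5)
Your proposal has a genuine gap, and it also misses the much shorter route the paper actually takes. The paper never tries to establish $v_y>0$ (equivalently $u_x<0$) in the rotational setting. Instead it applies the maximum principle \emph{directly to $v$}: differentiating $\Delta\psi=\gamma(\psi)$ in $x$ and using $\psi_x=-v$ gives $\Delta v=\gamma'(\psi)\,v$, i.e. $\Delta v-\gamma' v=0$; for $\gamma'\geq 0$ this operator admits the strong maximum principle for the nonnegative maximum, and for constant vorticity one simply has $\Delta v=0$ from $u_y-v_x=C$ and $u_x+v_y=0$. Since $v=0$ on the bed and on the lateral lines $x=0,\ x=\pi$, while $v=(u-c)y_x>0$ on the streamline $y=y(x)$, the maximum over the subdomain bounded above by that streamline must sit on the streamline. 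No differentiated quantity, no height-function computation, and no sign for $u_x$ is needed.

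By contrast, the two places you yourself flag as ``the delicate part'' --- the sign of the differentiated quantity on the free surface $p=0$, and the extra term in $u_x=\partial_q u+u_p v$ requiring control of $u_p=h_{pp}/h_p^2$ --- are precisely where your proof would have to happen, and you leave both unresolved; as written this is a plan, not a proof. Worse, the intermediate statement you reduce to ($u_x<0$, or even just monotonicity of $u$ along streamlines) is not available under the hypotheses of the lemma: the paper's own Lemma \ref{lemma4.3} (quoted from Basu) needs $\gamma\geq 0$ \emph{in addition to} $\gamma'\geq 0$ to get $u$ decreasing along streamlines, and your target is the stronger pointwise inequality. So under ``monotonically increasing or constant vorticity'' alone, the reduction you chose may simply be false or unprovable, while the lemma itself remains true via the direct argument above. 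The structural lesson: the hypothesis $\gamma'\geq 0$ is tailored to make $v$ itself satisfy an elliptic equation with a zeroth-order term of the right sign, not to make differentiated height-function quantities sign-definite.
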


\begin{proof}
Step 1: For the monotonically increasing vorticity, we have $\gamma'(\psi)>0$. Then we differentiate the identity (\ref{eq2.9}) with respect to $x$, we can get
\begin{eqnarray}
\Delta\psi_{x}=(\Delta\psi)_{x}=(\gamma(\psi))_{x}=\gamma'\psi_{x} \label{eq4.7}
\end{eqnarray}
From (\ref{eq2.8}), we have
\begin{eqnarray}
\Delta v=\gamma'v \label{eq4.8}
\end{eqnarray}
According the condition $\gamma'>0$, we have
\begin{eqnarray}
\Delta v-\gamma'v=0 \label{eq4.9}
\end{eqnarray}
Therefore, the nonnegative maximum of $v$ will be attained on boundaries by using strong maximum principle.\\
Step 2: For constant vorticity, we have $u_{y}-v_{x}=C$, where $C$ is a constant. Combining with (\ref{eq2.1}), we can deduce
\begin{eqnarray}
\Delta v=v_{xx}+v_{yy}=(u_{y}-c)_{x}-u_{xy}=0 \label{eq4.10}
\end{eqnarray}
So the maximum of $v$ will be attained on boundaries by using strong maximum principle.\\
Step 3: we know according to (\ref{eq2.5})(\ref{eq3.5})
\begin{eqnarray}
v=0,~~~~~on~y=-d,~x=0,~x=\pi \label{eq4.11}
\end{eqnarray}
By assumptions,
\begin{eqnarray}
v=(u-c)y_{x}>0,~~~~~~on~y=y(x) \label{eq4.12}
\end{eqnarray}
Thus $v$ will attain its maximum on $y=y(x)$ because of (\ref{eq4.11})(\ref{eq4.12}).
\end{proof}

\begin{remark}\label{remark4.1}
In fact, for the monotonically decreasing vorticity, if it's not too "negative"(that is to say $|\gamma'|<\inf _{v\in H^{1}_{0}(\Omega_{+}), ||v||_{2}=1} \int_{\Omega_{+}}|\nabla v|^{2}$), we have the same result as Lemma \ref{lemma4.2}. Because the first eigenvalue $\lambda_{1}$ of operator $-\Delta+\gamma'$ is positive, then we can use the maximum principle again according to \cite{25}.
\end{remark}

\begin{theorem}\label{theorem4.1}
For arbitrary vorticity, if there is only one inflction point on each streamline for $x\in(0,\pi)$, then $v$ first increases with positive values away from the crest line and then decreases toward zero beneath the wave trough along each streamline(except $y=-d$).
\end{theorem}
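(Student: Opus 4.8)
The plan is to run the argument of Theorem \ref{theorem3.2} essentially verbatim, the only substantive change being that the positivity input $v>0$ is now supplied by Lemma \ref{lemma4.1} (valid for arbitrary vorticity) rather than by Lemma \ref{lemma3.1}. The first point I would stress is that Corollary \ref{corollary3.1}---the dictionary between concavity/convexity of a streamline and the monotonicity of $v$ along it---is purely kinematic. Indeed, its derivation rests only on the conservation of mass $u_x+v_y=0$ from (\ref{eq2.1}), the fact that a streamline is a level set of $\psi$ (which gives $y_x=v/(u-c)$), the relation $v=(u-c)y_x$, and the assumption $u<c$. None of these invoke the irrotationality that characterises a Stokes wave, so Corollary \ref{corollary3.1} holds word for word for arbitrary vorticity, provided only that each streamline is of class $C^2$ so that $y_{xx}$ may be computed; this regularity is available in the rotational setting as well.

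With this observation in hand, I would fix a streamline $y=y(x)$ and denote by $(x_0,y(x_0))$, $x_0\in(0,\pi)$, its unique inflection point. As in Theorem \ref{theorem3.2} there are exactly two possibilities: Case 1, in which the streamline is convex on $(0,x_0)$ and concave on $(x_0,\pi)$; and Case 2, in which it is concave on $(0,x_0)$ and convex on $(x_0,\pi)$. By the antisymmetry and periodicity of $v$ we still have the boundary values $v(0,y)=v(\pi,y)=0$ recorded in (\ref{eq3.5}), and Lemma \ref{lemma4.1} gives $v>0$ throughout $\Omega_{+}$.

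The decisive step is to exclude Case 1. If the streamline were convex on $(0,x_0)$, then by the rotational version of Corollary \ref{corollary3.1} the function $x\mapsto v(x,y(x))$ would be strictly decreasing there; starting from $v(0,y(0))=0$ this would force $v<0$ immediately to the right of the crest line, contradicting $v>0$ from Lemma \ref{lemma4.1}. Hence only Case 2 can occur. In Case 2, Corollary \ref{corollary3.1} gives that $v$ increases along the streamline on $(0,x_0)$ and decreases on $(x_0,\pi)$; combined with $v(0,y(0))=v(\pi,y(\pi))=0$ and $v>0$ in between, this is exactly the asserted behaviour---$v$ rises from zero at the crest line to its maximum at the inflection point and then falls back to zero beneath the trough---which completes the argument.

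The main (and really the only) obstacle I anticipate is the bookkeeping needed to confirm that the chain of identities behind Corollary \ref{corollary3.1} uses nothing special about $\gamma$; once that is checked, the positivity of Lemma \ref{lemma4.1} and the endpoint values (\ref{eq3.5}) carry the rest with no further analytic input. In particular, the stronger conclusion of Lemma \ref{lemma3.1}/Lemma \ref{lemma4.2} (that $v$ attains its maximum on the streamline, which relies on $u_x<0$ or on a sign condition on $\gamma'$) is \emph{not} required here: ruling out Case 1 uses only the sign of $v$ near the crest, so the statement genuinely holds for arbitrary vorticity.
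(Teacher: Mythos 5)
Your proposal is correct and takes essentially the same route as the paper: the paper's proof likewise observes that the arguments behind Theorem \ref{theorem3.1}, Corollary \ref{corollary3.1} and Theorem \ref{theorem3.2} nowhere use irrotationality, and substitutes Lemma \ref{lemma4.1} for the positivity of $v$. Your explicit remark that excluding Case 1 needs only $v>0$ near the crest line---and not the maximum-on-streamline property of Lemma \ref{lemma3.1}/Lemma \ref{lemma4.2}, which would require a sign condition on $\gamma'$---is precisely the (implicit) reason the paper can assert the result for arbitrary vorticity, so your write-up is a faithful, more detailed rendering of the paper's argument.
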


\begin{proof}
By observing the proof of Theorem \ref{theorem3.1}, we find the vorticity has no effects on the existence of inflection point. From Lemma \ref{lemma4.1} $v>0$ in $\Omega_{+}\bigcup S_{+}$, thus we can use the same method in Theorem \ref{theorem3.2} and Corollary \ref{corollary3.1} to get the result.
\end{proof}

\begin{theorem}\label{theorem4.2}
For the monotonically increasing (or monotonically decreasing with a bound) vorticity, if there is only one inflection point on free surface for $x\in(0,\pi)$, then $v$ will attain its maximum at this inflection point in $\overline{\Omega_{+}}$.
\end{theorem}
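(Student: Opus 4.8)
The plan is to carry over the argument of Theorem \ref{theorem3.3} verbatim in structure, substituting the rotational ingredients for the irrotational ones: Lemma \ref{lemma4.2} (supplemented by Remark \ref{remark4.1}) in place of Lemma \ref{lemma3.1}, and Theorem \ref{theorem4.1} in place of Theorem \ref{theorem3.2}. The conclusion factors into two independent localizations of the maximum of $v$ over $\overline{\Omega_{+}}$: a first step that confines the maximum to the free surface, and a second step that pins its position along the free surface to the inflection point. Once both are in hand, the result is immediate.

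For the first step I would observe that the free surface $y=\eta(x)$ is itself a streamline, since $\psi=0$ there by (\ref{eq2.11}). Applying Lemma \ref{lemma4.2} to this topmost streamline, the associated sub-domain $[0,\pi]\times[-d,\eta(x)]$ is all of $\overline{\Omega_{+}}$, so the maximum of $v$ over $\overline{\Omega_{+}}$ is attained somewhere on $y=\eta(x)$. This is precisely the step that uses the restriction on the vorticity: for monotonically increasing or constant $\gamma$, Lemma \ref{lemma4.2} delivers the boundary maximum directly via the strong maximum principle applied to (\ref{eq4.9}); for the monotonically decreasing case one instead invokes Remark \ref{remark4.1}, where the bound $|\gamma'|<\inf_{v\in H^{1}_{0}(\Omega_{+}),\,\|v\|_{2}=1}\int_{\Omega_{+}}|\nabla v|^{2}$ forces the principal eigenvalue of $-\Delta+\gamma'$ to be positive and hence restores the maximum principle. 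For the second step I would apply Theorem \ref{theorem4.1} to the free-surface streamline: with the single inflection point $(x_{0},\eta(x_{0}))$, the vertical velocity $v$ increases with positive values from the crest $(0,\eta(0))$ up to $(x_{0},\eta(x_{0}))$ and then decreases toward zero at the trough $(\pi,\eta(\pi))$, so the maximum of $v$ restricted to $y=\eta(x)$ sits exactly at the inflection point.

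Combining the two localizations yields that $v$ attains its maximum over $\overline{\Omega_{+}}$ at $(x_{0},\eta(x_{0}))$. I expect the only genuinely delicate point to be the first step, and within it the monotonically decreasing subcase: one must check that the stated bound on $\gamma'$ really produces a strictly positive first eigenvalue of $-\Delta+\gamma'$, so that the comparison/maximum-principle machinery of Remark \ref{remark4.1} applies and the nonnegative maximum cannot be taken in the interior. Everything else is a direct transcription of the Stokes-wave reasoning, since Theorem \ref{theorem4.1} already holds for arbitrary vorticity and supplies the monotone-then-monotone profile of $v$ along the surface. This would complete the proof of Theorem \ref{theorem4.2}.
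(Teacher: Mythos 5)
Your proposal is correct and follows essentially the same route as the paper: the paper's proof likewise combines Lemma \ref{lemma4.2} together with Remark \ref{remark4.1} to place the maximum of $v$ on the free surface $S_{+}$, and then invokes Theorem \ref{theorem4.1} to locate that maximum at the single inflection point. Your additional observation that the free surface is the topmost streamline (so the relevant sub-domain is all of $\overline{\Omega_{+}}$) simply makes explicit a step the paper leaves implicit.
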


\begin{proof}
From Lemma \ref{lemma4.2} and Remak \ref{remark4.1}, for the monotonically increasing (or monotonically decreasing with a bound) vorticity, the maximum of $v$ in $\overline{\Omega_{+}}$ will be attained on free surface $S_{+}$. According to Theorem \ref{theorem4.1}, it is easy to see the maximum of $v$ in $\overline{\Omega_{+}}$ will be attained at this inflection point.
\end{proof}

Up to now, we have showed all results on vertical velocity $v$, however, it is interesting to find another proposition on vertical displacement (see Figure \ref{fig4}) of a particle on streamline, which also indirectly indicate some properties of $v$. We state the proposition based on the following conclusion in\cite{1}.

\begin{lemma}{(Lemma5.2\cite{1})}\label{lemma4.3}
Suppose $\gamma'\geq 0$, $u<c$ and $\gamma\geq 0$. Then the horizontal velocity $u$, even in $x$, is a strictly decreasing function of $x$ along any streamline in $\overline{\Omega_{+}}$.
\end{lemma}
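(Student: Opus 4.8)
The plan is to work in the height-function formulation of Section 2 and reduce the claim to a single sign condition. Since $u=c-1/h_{p}$ and a streamline in $\overline{\Omega_{+}}$ is exactly a horizontal segment $p=\text{const}$ with $q=x$ increasing from the crest ($q=0$) to the trough ($q=\pi$), I would compute
\[
\frac{d}{dx}u(x,g(x))=\partial_{q}\Big(c-\frac{1}{h_{p}}\Big)=\frac{h_{qp}}{h_{p}^{2}}.
\]
Because $u<c$ gives $h_{p}=1/(c-u)>0$, the whole statement is equivalent to proving $h_{qp}<0$ throughout $\Omega_{+}$. (Equivalently, the momentum equation (\ref{eq2.3}) gives $\frac{du}{dx}=-p_{x}/(u-c)$ along a streamline, so with $u-c<0$ the claim reduces to showing the horizontal pressure gradient $p_{x}$ is negative in $D_{+}$; I would keep this as a consistency check but carry out the argument in the height variables, where the boundary is rectangular.)

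Next I would set up a maximum-principle argument for $w:=h_{qp}$, in direct analogy with the proof of Lemma \ref{lemma4.1}. Differentiating the height equation (\ref{eq2.13}) in $q$ already yields the uniformly elliptic identity $Lh_{q}=0$ of (\ref{eq4.1}); differentiating once more in $p$ produces an equation of the form $Lw+b_{1}w_{q}+b_{2}w_{p}+a\,w=F$, where the zeroth-order coefficient $a$ and the right-hand side $F$ collect the terms coming from $\partial_{p}\gamma(-p)=-\gamma'(-p)$ and from differentiating the lower-order coefficients of $L$ (the latter involving $h_{q}$, which is already known to be negative). The hypotheses $\gamma'\geq 0$ and $\gamma\geq 0$ are exactly what is needed to fix the sign of $a$ and $F$ so that the weak and strong maximum principles apply to $w$; this is the same mechanism by which $\gamma'\geq0$ entered (\ref{eq4.8})--(\ref{eq4.9}) for $v$ in Lemma \ref{lemma4.2}.

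The remaining work is the boundary analysis of $w=h_{qp}$ on the four sides of $\Omega_{+}$. On the crest and trough lines $q=0,\pi$ one has $h_{q}\equiv 0$ for every $p$ by (\ref{eq4.3}), hence $w=h_{qp}=0$ there. On the flat bed $p=p_{0}$, where $-h_{q}>0$ in the interior while $-h_{q}=0$, Hopf's boundary-point lemma applied to $L(-h_{q})=0$ gives $\partial_{p}(-h_{q})>0$, that is $w<0$ on $p=p_{0}$. It then remains to control $w$ on the free surface $p=0$: differentiating the Bernoulli condition (\ref{eq2.14}) in $q$ and using $h_{q}=\eta_{x}<0$ from (\ref{eq4.4}) should pin down the sign of $h_{qp}$ at $p=0$, and this is precisely where $\gamma\geq0$ (which enters $Q$ through $\Gamma$) is used. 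Once $w\leq 0$ holds on the whole boundary and $a$ has the admissible sign, the maximum principle forces $w\leq 0$ in $\Omega_{+}$, and the strong maximum principle upgrades this to the strict $w<0$ (it cannot vanish identically, since $w<0$ on the bed), yielding $\frac{d}{dx}u<0$ along every streamline.

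I expect the decisive obstacle to be the free-surface estimate: establishing $h_{qp}\leq 0$ on $p=0$ and, in tandem, verifying that the zeroth-order coefficient $a$ and the source $F$ of the equation for $w$ carry cooperating signs. A single sign error in either place would invalidate the maximum principle, so the careful bookkeeping of the $\gamma$- and $\gamma'$-dependent terms produced by the second differentiation of the quasilinear equation (\ref{eq2.13}) --- rather than the reduction carried out in the first step --- is the heart of the proof.
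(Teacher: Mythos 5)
You should first be aware that the paper itself offers no proof of this lemma: it is imported verbatim from Basu \cite{1} (Lemma 5.2 there) and used as a black box in Proposition \ref{proposition4.1}, so your attempt has to be measured against the argument in \cite{1} (which builds on Constantin--Strauss \cite{10,12} and Varvaruca \cite{4}), not against anything in this paper. Your reduction is correct and is the right normalization: along a streamline $p=\mathrm{const.}$ one has $\partial_q u=h_{qp}/h_p^2$ with $h_p=1/(c-u)>0$, so the lemma is equivalent to $h_{qp}<0$ for $q\in(0,\pi)$, and also equivalent to $P_x<0$ (the Euler equation gives $P_x=h_{qp}/h_p^3$). Your boundary analysis on three of the four sides is also sound: $h_{qp}\equiv0$ on the lateral sides $q=0,\pi$, and Hopf's lemma applied to $Lh_q=0$ in (\ref{eq4.1}), with $h_q<0$ inside and $h_q=0$ on $p=p_0$, does give $h_{qp}<0$ on the open bed.

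The two steps you postponed, however, are not bookkeeping; they are where the argument collapses. (i) \emph{Interior equation.} Differentiating (\ref{eq4.1}) in $p$ produces the terms $2h_ph_{pp}h_{qqq}$ and $2h_qh_{qq}h_{ppp}$, and neither $h_{qqq}$ nor $h_{ppp}$ is a derivative of $w=h_{qp}$. Eliminating them by means of (\ref{eq2.13}) and its first derivatives leaves an equation $\mathcal{M}w+\tilde{c}\,w=-F_0$ with $\mathcal{M}$ elliptic, whose genuine ($w$-independent) source works out to
\[
F_0=\frac{2h_qh_p^2h_{qq}}{1+h_q^2}\left(\gamma(-p)\,h_{pp}-\gamma'(-p)\,h_p\right),
\]
while $\tilde{c}$ contains $3\gamma'(-p)h_p^2$ (the \emph{unfavorable} sign when $\gamma'\geq0$) together with terms proportional to $h_{qq}h_{pp}$ and $w^2$. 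The factor $h_{qq}$ is the streamline curvature, and by this paper's own Theorem \ref{theorem3.1} and Theorem \ref{theorem4.1} it changes sign on every streamline --- that is exactly what an inflection point is; likewise $h_{pp}$ has the sign of $u_y$, which is uncontrolled. Hence no hypothesis on $\gamma,\gamma'$ alone can make the source and the zeroth-order coefficient cooperate: the obstruction is geometric, not a matter of tracking $\gamma$-terms. (ii) \emph{Free surface.} Differentiating (\ref{eq2.14}) in $q$ and using (\ref{eq2.14}) again gives, on $p=0$, $h_{qp}=h_qh_p(h_{qq}+gh_p^2)/(1+h_q^2)$; since $h_q=\eta_x<0$ there, the desired sign $h_{qp}\leq0$ is equivalent to $\eta''+g/(c-u)^2\geq0$. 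Near the trough this holds because $\eta''>0$, but near the crest $\eta''<0$ and the inequality is precisely the surface case of the lemma you are proving (the identity is an equivalence); moreover $\gamma\geq0$ cannot help here, since $Q$ and $\Gamma$ disappear upon differentiation. So this step is circular. This is why the known proofs run in the opposite direction: one first establishes $P_x\leq0$ in the fluid domain --- in the irrotational case \cite{10} via the harmonic function $\partial_x\log\left[(c-u)^2+v^2\right]$, and with vorticity \cite{1,4} via maximum principles in which $\gamma\geq0$, $\gamma'\geq0$ enter --- and only then reads off the monotonicity of $u$ along streamlines from $\frac{d}{dx}u(x,g(x))=-P_x/(u-c)$, rather than attacking $h_{qp}$ through a second differentiation of (\ref{eq2.13}).
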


\begin{figure}[ht]
\centering
\includegraphics{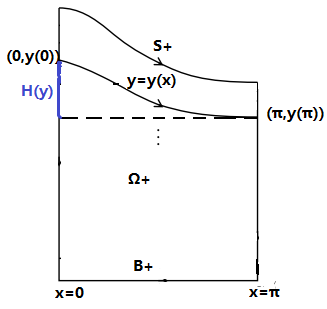}
\caption{The vertical displacement $H(y)$ of a particle on streamline}
\label{fig4}
\end{figure}

\begin{proposition}\label{proposition4.1}
For the monotonically increasing nonnegative vorticity, the vertical displacement of a particle decreases with depth.
\end{proposition}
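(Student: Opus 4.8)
The plan is to carry out the argument entirely in the height-function coordinates of Section 2.3, where each streamline is the horizontal segment $\{(q,p):q\in[0,\pi]\}$ for a fixed $p\in[p_{0},0]$, with $p=0$ labelling the free surface and $p=p_{0}$ the flat bed. Since $y=h(q,p)-d$, the crest of a given streamline sits at $q=0$ and its trough at $q=\pi$ by the monotonicity assumption (3), so the vertical displacement experienced by a particle as it travels along that streamline is
\[
H(p)=h(0,p)-h(\pi,p)=-\int_{0}^{\pi}h_{q}(q,p)\,dq .
\]
At the bed $h(q,p_{0})\equiv 0$ by (\ref{eq2.15}), so $H(p_{0})=0$, which is the expected value for the flat bottom streamline. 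Because $h_{p}=\frac{1}{c-u}>0$, larger $p$ corresponds to greater height, i.e. to shallower depth; hence it suffices to prove that $H$ is strictly increasing in $p$.

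Next I would differentiate $H$ under the integral sign, using $\partial_{p}h_{q}=\partial_{q}h_{p}$, to obtain
\[
H'(p)=-\int_{0}^{\pi}h_{qp}(q,p)\,dq=h_{p}(0,p)-h_{p}(\pi,p).
\]
Recalling from the change of variables of Section 2.3 that $h_{p}=\frac{1}{c-u}$, where here $u(0,p)$ and $u(\pi,p)$ denote the horizontal velocity at the crest and at the trough of the streamline labelled by $p$, this reduces the whole question to a comparison of $u$ at the crest and the trough of one and the same streamline:
\[
H'(p)=\frac{1}{c-u(0,p)}-\frac{1}{c-u(\pi,p)} .
\]

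This is where Lemma \ref{lemma4.3} enters. For a monotonically increasing nonnegative vorticity we have $\gamma'\ge 0$ and $\gamma\ge 0$, while $u<c$ holds by assumption (5); thus Lemma \ref{lemma4.3} applies and $u$ is strictly decreasing in $x$ along each streamline. Consequently the crest value exceeds the trough value, $u(0,p)>u(\pi,p)$, so that $c-u(0,p)<c-u(\pi,p)$ and therefore $H'(p)>0$. Hence $H$ increases with $p$, i.e. $H$ decreases as $p$ decreases toward $p_{0}$; since decreasing $p$ means decreasing height and hence increasing depth, the vertical displacement decreases with depth, as claimed.

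The computation itself is short, and I do not expect a serious obstacle. The only points requiring care are the bookkeeping that ties the monotone direction of $p$ to physical depth, and the observation that the two velocities appearing in $H'(p)$ are evaluated on the same streamline, so that the pointwise monotonicity furnished by Lemma \ref{lemma4.3} may legitimately be invoked. The substantive content is entirely supplied by that lemma: without the signs of $\gamma$ and $\gamma'$ one could not guarantee that $u$ is monotone along streamlines and the argument would collapse. The remaining minor technical step is to justify differentiating $H$ under the integral together with the identity $\partial_{p}h_{q}=\partial_{q}h_{p}$, which follows from the $C^{2,\alpha}$ regularity of $h$ on the closed rectangle $\overline{\Omega_{+}}$.
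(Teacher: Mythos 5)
Your proof is correct, and it takes a recognizably different route from the paper's, even though both hinge on the same key input (Lemma \ref{lemma4.3} together with $u<c$). The paper stays in physical coordinates $(x,y)$: it differentiates the streamline slope $y_{x}=\frac{v}{u-c}$ with respect to the vertical coordinate, uses the continuity equation $u_{x}+v_{y}=0$ and $v=(u-c)y_{x}$ to rewrite that derivative as $\frac{u_{x}+u_{y}y_{x}}{c-u}$, invokes Lemma \ref{lemma4.3} \emph{pointwise} along the streamline to get $\frac{dy_{x}}{dy}\leq 0$ (its (\ref{eq4.15})), and then integrates this signed integrand over $x\in(0,\pi)$ to conclude $H_{y}\geq 0$. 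You instead pass to the Dubreil-Jacotin variables $(q,p)$, where streamlines are horizontal lines and the displacement is literally the two-point difference $H(p)=h(0,p)-h(\pi,p)$; differentiating and using $h_{p}=\frac{1}{c-u}$ collapses everything to a comparison of $u$ at the crest and trough of the same streamline, so Lemma \ref{lemma4.3} is needed only through the endpoint consequence $u(0,p)>u(\pi,p)$. Your decomposition is thus a boundary-term computation rather than an integral of a pointwise-signed quantity. What your version buys: it sidesteps the continuity equation and the somewhat informal differentiation of $y_{x}$ with respect to the streamline label in physical variables, and it delivers the strict inequality $H'(p)>0$, where the paper records only $H_{y}\geq 0$. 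What the paper's version buys: the finer, local statement (\ref{eq4.15}) about how the streamline slope varies with depth, not just the integrated displacement. One cosmetic remark: your detour through differentiation under the integral sign is redundant, since $H'(p)=h_{p}(0,p)-h_{p}(\pi,p)$ follows by differentiating the two-point formula directly; this is harmless, as is the mixed-partials identity you invoke, which holds by the regularity of $h$.
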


\begin{proof}
From Lemma \ref{lemma4.3}, for the monotonically increasing nonnegative vorticity, we know that
\begin{eqnarray}
\frac{du(x,y(x))}{dx}=u_{x}+u_{y}y_{x}\leq0 \label{eq4.13}
\end{eqnarray}
On the other hand, from (\ref{eq3.8}) we have $y_{x}=-\frac{\psi_{x}}{\psi_{y}}=\frac{v}{u-c} ~on ~y=y(x)$, thus
\begin{eqnarray}
\frac{dy_{x}}{dy}=\frac{d(\frac{v}{u-c})}{dy}=\frac{v_{y}(u-c)-u_{y}v}{(u-c)^{2}} \label{eq4.14}
\end{eqnarray}
According to (\ref{eq3.1})(\ref{eq3.11})(\ref{eq4.13})(\ref{eq4.14}) and our assumption $u<c$, we get
\begin{eqnarray}
\frac{dy_{x}}{dy}=\frac{-u_{x}(u-c)-u_{y}(u-c)y_{x}}{(u-c)^{2}}=-\frac{u_{x}+u_{y}y_{x}}{u-c}=\frac{u_{x}+u_{y}y_{x}}{c-u}\leq0 \label{eq4.15}
\end{eqnarray}
We define the vertical displacement of a particle on streamline $y=y(x)$ is $H(y)$ (see Figure \ref{fig4}), then
\begin{eqnarray}
H(y)=y(0)-y(\pi)=-\int^{\pi}_{0}y_{x}dx \label{eq4.16}
\end{eqnarray}
From (\ref{eq4.15})(\ref{eq4.16}), thus
\begin{eqnarray}
H_{y}=\frac{d(y(0)-y(\pi))}{dy}=-\int^{\pi}_{0}\frac{dy_{x}}{dy}dx\geq0 \label{eq4.17}
\end{eqnarray}
Now we finish the proof.
\end{proof}

\begin{remark}\label{remark4.2}
In fact, it is suitable to take nonnegative vorticity into consideration. Because there is a limitation for negative vorticity but without any limitation for nonnegative vorticity on proving the existence of solution in \cite{13}. The Proposition \ref{proposition4.1} indirectly indicates that the maximum value of $v$ must be attained at free surface $y=\eta(x)$, which is consistent with Theorem \ref{theorem4.2}.
\end{remark}

\ack{The work is supported in part by the National Natural Science Foundation of China No.11571057.}

\section*{References}

\numrefs{99}
\bibitem{1}
Basu B 2019 On some properties of velocity field for two dimensional rotational steady water waves {\it Nonliear Anal.} \textcolor{blue}{{\bf 184} 17-34}

\bibitem{2}
Varvaruca E 2009 On the existence of extreme waves and the Stokes conjecture with vorticity {\it J.Differential Equations} \textcolor{blue}{{\bf 246} 4043-4076}

\bibitem{3}
Varvaruca E 2012 The Stokes conjecture for waves with vorticity {\it Ann. Inst. H. Poinc\'are Anal. Non Lin\'eaire} \textcolor{blue}{{\bf 29} 861-885}

\bibitem{4}
Varvaruca E 2008 On some properties of travelling water waves with vorticity {\it SIAM J. Math. Anal.} \textcolor{blue}{{\bf 39} 1686-1692}

\bibitem{5}
Toland J F 1996 Stokes waves {\it Topol. Methods Nonlinear Anal.} \textcolor{blue}{{\bf 7} 1-26}

\bibitem{6}
Ehrnstr\"om M 2008 On the streamlines and particle paths of gravitational water waves {\it Nonlinearity} \textcolor{blue}{{\bf 21} 1141-1154}

\bibitem{7}
Constantin A and Varvaruca E 2011 Steady periodic water waves with constant vorticity: Regularity and local bifurcation {\it Arch. Ration. Mech. Anal.} \textcolor{blue}{{\bf 199} 33-67}

\bibitem{8}
Constantin A Strauss W and Varraruca E 2016 Global bifurcation of steady gravity water waves with critical layers {\it Acta Math.} \textcolor{blue}{{\bf 217} 195-262}

\bibitem{9}
Constantin A and Strauss W 2011 Periodic travelling gravity water waves with discontinuous vorticity {\it Arch. Ration. Mech. Anal.} \textcolor{blue}{{\bf 202} 133-175}

\bibitem{10}
Constantin A and Strauss W 2010 Pressure beneath a Stokes wave {\it Comm. Pure Appl. Math.} \textcolor{blue}{{\bf 63} 533-557}

\bibitem{11}
Constantin A and Strauss W 2007 Stability properties of steady water waves with vorticity {\it Comm. Pure Appl. Math.} \textcolor{blue}{{\bf 60} 911-950}

\bibitem{12}
Constantin A and Strauss W 2007 Rotational steady water waves near stagnation {\it Philos. Trans. R. Soc. Lond. Ser. A Math. Phys. Eng. Sci.} \textcolor{blue}{{\bf 365} 2227-2239}

\bibitem{13}
Constantin A and Strauss W 2004 Exact steady periodic water waves with vorticity {\it Comm. Pure Appl. Math.} \textcolor{blue}{{\bf 57} 481-527}

\bibitem{14}
Keady G and Norbury J 1978 On the existence theory for irrotational water waves {\it Math. Proc. Camb. Philos. Soc.} \textcolor{blue}{{\bf 83} 137-157}

\bibitem{15}
Constantin A and Esche J 2011 Analyticity of periodic travelling free surface waves with vorticity {\it Ann. of Math.} \textcolor{blue}{{\bf 173} 559-568}

\bibitem{16}
Constantin A and Esche J 2004 Symmetry of steady deep-water with vorticity {\it European J. Appl. Math.} \textcolor{blue}{{\bf 15} 755}

\bibitem{17}
Constantin A and Esche J 2004 Symmetry of steady periodic surface water waves with vorticity {\it J. Fluid Mech.} \textcolor{blue}{{\bf 498} 171-181}

\bibitem{18}
Constantin A 2006 The trajectories of particles in Stokes waves {\it Invent. Math.} \textcolor{blue}{{\bf 166} 523-535}

\bibitem{19}
Constantin A 2011 Nonlinear water waves with applications to wave-current interactions and tsunamis (CBMS-NSF Conference Series in Applied Mathematics vol 81)(Philadelphia,PA: SIAM)

\bibitem{20}
Clamond D 2012 Note on the velocity and related fields of steady irrotational two-dimensional surface gravity waves {\it Philos. Trans. R. Soc. Lond. Ser. A Math. Phys. Eng. Sci.} \textcolor{blue}{{\bf 370} 1572-1586}

\bibitem{21}
Amick C J and Toland J F 1981 On periodic water-waves and their convergance to solitary waves in the long-wave limit {\it Phil.Trans.R.Soc.A} \textcolor{blue}{{\bf 303} 633-669}

\bibitem{22}
Amick C J and Toland J F 1982 On the Stokes conjecture for the wave of extreme form {\it Acta Math.} \textcolor{blue}{{\bf 148} 193-214}

\bibitem{23}
Lighthill J 1978 Waves in fluids (Cambridge University Press, Cambridge: UK)

\bibitem{24}
Dubreil M L and Jacotin 1934 Sur la d\'etermination rigoureuse des ondes permanentes p\'eriodiques d'ampleur finite {\it J.Math.Pures Appl.} \textcolor{blue}{{\bf 13} 217-291}

\bibitem{25}
Berestychi H Nirenberg L and Varadhan S R S 1994 The principal eigenvalue and maximum principle for second-order elliptic operators in general domains {\it Comm. Pure Appl. Math.} \textcolor{blue}{{\bf 47} 47-92}
\endnumrefs

\end{document}